\documentclass{amsart}
\usepackage{packages}
\usepackage{xcolor}
\usepackage{commath}

\newcommand{\R}{\mathbb{R}}

\usepackage[square, numbers, sort]{natbib}

\title{Lie minimal Weingarten surfaces}

\author{Joseph Cho}
\address[Joseph Cho]{Institute of Discrete Mathematics and Geometry, TU Wien, Wiedner Hauptstrasse 8-10/104, 1040 Wien, Austria}
\email{jcho@geometrie.tuwien.ac.at}

\author{Masaya Hara}
\address[Masaya Hara]{Department of Mathematics, Graduate School of Science, Kobe university, 1-1 Rokkodai-cho, Nada-ku, Kobe 657-8501, Japan}
\email{mhara@math.kobe-u.ac.jp}

\author{Denis Polly}
\address[Denis Polly]{Institute of Discrete Mathematics and Geometry, TU Wien, Wiedner Hauptstrasse 8-10/104, 1040 Wien, Austria}
\email{dpolly@geometrie.tuwien.ac.at}

\author{Tomohiro Tada}
\address[Tomohiro Tada]{Department of Mathematics, Graduate School of Science, Kobe university, 1-1 Rokkodai-cho, Nada-ku, Kobe 657-8501, Japan}
\email{214s006s@gsuite.kobe-u.ac.jp}

\date{v4 \today}

\subjclass[2020]{Primary 53A10; Secondary 53A40, 53C42.}
\keywords{Lie minimal surface, minimal surface, constant mean curvature surface, linear Weingarten surface}

\begin{document}

\begin{abstract}
    We consider Lie minimal surfaces, the critical points of the simplest Lie sphere invariant energy, in Riemannian space forms. These surfaces can be characterized via their Euler-Lagrange equations, which take the form of differential equations of the principal curvatures. Surfaces with constant mean curvature that satisfy these equations turn out to be rotational in their space form. We generalize in flat ambient space: here surfaces where the principal curvatures satisfy an affine relationship as well as elliptic linear Weingarten surfaces are rotational as well. 
\end{abstract}

\maketitle

\section{Introduction}
Willmore functional was first considered by Germain \cite{germain_recherches_1821} as a surface analogue of the bending energy of curves whose minimizers are the elastic curves of Bernoulli and Euler.
The critical points of the Willmore functional under compactly supported variations are called \emph{constrained Willmore surfaces}, and they have been widely studied (see, for example, \cite{bohle_constrained_2008, burstall_schwarzian_2002}), in part, due to the now-resolved Willmore conjecture \cite{marques_min-max_2014, willmore_note_1965}.
In particular, the Willmore functional is conformally invariant, and is in fact the simplest variational problem that can be considered in conformal geometry, or Möbius geometry \cite{thomsen_uber_1924}.
On the other hand, those surfaces not determined by invariants of surfaces in a geometry are called \emph{deformable surfaces}, and Cartan has showed that \emph{isothermic surfaces}, those surfaces that admit conformal curvature lines coordinates, are exactly the deformable surfaces in Möbius geometry \cite{cartan_les_1923}.
Interestingly, constant mean curvature surfaces in space forms are important examples that are both constrained Willmore and isothermic \cite{thomsen_uber_1924,richter_conformal_1997}.
Thus, certain curvature properties, a space form notion, play an important role in consideration of both minimality and deformability of Möbius geometry.

Similar considerations can be made in other sphere geometries.
In the context of Laguerre geometry, the simplest variational problem revolves around a functional that is Laguerre invariant called \emph{Weingarten functional}; the critical points of Weingarten functional under compactly supported variations are called \emph{$L$-minimal surfaces} \cite{blaschke_uber_1924-1} (see also \cite{musso_variational_1996}).
On the other hand, the deformable surfaces in Laguerre geometry are exactly \emph{$L$-isothermic surfaces} \cite{blaschke_uber_1924}, those surfaces with curvature line coordinates that are conformal with respect to the third fundamental form.
Again, certain curvature properties play a crucial role in determining the surfaces with both minimality and deformability, as minimal surfaces in Euclidean space are both $L$-minimal and $L$-isothermic \cite{blaschke_uber_1924-1,musso_variational_1996}.

In this paper, we focus on Lie sphere geometry \cite{lie_ueber_1872-1}, the sphere geometry that includes both Möbius geometry and Laguerre geometry as its subgeometries, and consider Lie minimal surfaces that satisfy certain curvature properties in space forms.

First considered by Blaschke \cite{blaschke1929}, \emph{Lie minimal surfaces} are surfaces that are the critical points of the simplest energy $L_{\textrm{Lie}}$, invariant under Lie sphere transformations, under compactly supported variations. This class of surfaces has raised interest recently, notably from two different perspectives: \cite{MR1756094} showed that they constitute an integrable system (an integrable reduction of the Gauss-Codazzi equations of the Lie sphere frame), while \cite{MR1918585} demonstrated that they have harmonic conformal Gauss maps (a congruence of Lie cyclides). 

These two characetrizations are purely Lie sphere geometric in nature. However, given a surface in a Riemannian space form, the energy $L_{\textrm{Lie}}$ can be computed using the principal curvatures of the space form projection \cite{MR1756094}. The goal of this paper is gain insights into the geometry of Lie minimal surfaces whose principal curvatures satisfy additional properties, namely, relationships of the Weingarten type. We will show that affine Weingarten and elliptic linear Weingarten surfaces are Lie minimal if and only if they are rotational in their space forms in the sense of \cite{MR694383}. 

In Section~\ref{sec:preliminaries} we will lay the groundwork for our investigations. Starting from the expression of $L_{\textrm{Lie}}$ via principal curvatures of the space form projection, we will develop the Euler-Lagrange equations for Lie minimal surfaces in space forms (Lemma~\ref{lem:lieMinimalConditions}). These differential equations for the principal curvatures are then used in Sections~\ref{sec:lieMinimalcmc} and \ref{sec:lieMinimalWeingarten} to investigate Lie minimal surfaces with additional curvature conditions. 

As minimal surfaces in Euclidean space are both constrained Willmore and L-minimal, while cmc surfaces in space forms are constrained Willmore, the starting point of our investigations are minimal Lie minimal surfaces or, more generally, Lie minimal surfaces with constant mean curvature ${H=\tfrac{1}{2}(k_1 + k_2)}$. In contrary to the Möbius or Laguerre geometric cases, we show that these surfaces must be rotational within their space form in the sense of \cite{MR694383} (Theorem~\ref{Theorem: cmc Lie minimal}). Since rotational cmc surfaces in Riemannian space forms have been completely classified (most recently in \cite{polly2023rotational}), this provides a complete classification of all Lie minimal cmc surfaces.  

Following this, we consider two more classes of surfaces with constraint principal curvatures in Section~\ref{sec:lieMinimalWeingarten}: on the one hand, we consider surfaces in $\R^3$ where the principal curvatures satisfy an affine linear relationship of the form
\[
 xk_1 + yk_2 = z.
\]
This extends the class of cmc surfaces, as these correspond to the special case $x=y$. We call this class the class of \emph{affine Weingarten surfaces}. We will demonstrate that Lie minimal affine Weingarten surfaces are always rotational in $\R^3$ (Theorem~\ref{thm:AffineLieMinimal}). The classification of rotational affine Weingarten surfaces of, e.g., \cite{MR4089078} thus provides a complete classification of this class as well. 

On the other hand, we consider the class of \emph{linear Weingarten} surfaces, examples of deformable surfaces in Lie sphere geometry \cite{burstall2012}.
This class extends the class of cmc surfaces to those where there is an affine linear relationship between the mean curvature $H$ and the Gauss curvature $K = k_1 k_2$, that is
\[
 aK + 2bH + c=0.
\]
According to Bonnet's theorem (Proposition~\ref{prop:Bonnet}), all parallel transformations of linear Weingarten surfaces are again linear Weingarten. Since the parallel transformation preserves rotational surfaces and Lie minimality (a Lie sphere invariant notion), we can prove that all Lie surfaces that are parallel to a cmc surface are rotational. In the case of Euclidean ambient geometry, this comprises the class of linear Weingarten surfaces with $b^2-ac>0$, dubbed elliptic linear Weingarten surfaces in \cite{lopez2008c}.

\section{Preliminaries}\label{sec:preliminaries}
In this section we describe the setup of our investigations. The main goal is to fix notation we will use for the theory of surfaces in space forms. In Subsection~\ref{sec:lieMinimalSurfaces} we will define the notion of Lie minimality. The main result of this section is Lemma~\ref{lem:lieMinimalConditions} which we use to prove that channel surfaces are always Lie minimal (and in later sections). 

Let $M_\kappa$ be a Riemannian space form with constant sectional curvature $\kappa$. We think of $M_\kappa$ as either $\R^3$ ($\kappa=0$) or a three dimensional (Riemannian) quadric in $\R^4$ ($\kappa=1$) or $\R^{3,1}$ ($\kappa=-1$). Let $X: \Sigma \rightarrow M_\kappa$ be an umbilic-free immersion with \emph{curvature line} coordinates $(u,v)$, that is, the first and second fundamental form read as 
\begin{equation}\label{eq:fundamentalForms2}
	\mathrm{I}(u,v)=E \dif{u}^2 + G \dif{v}^2 \quad \text{and} \quad \mathrm{II}(u,v)=\mathrm{L} \dif{u}^2 + \mathrm{N} \dif{v}^2. 
\end{equation}
We call the coordinates $(u,v)$ \emph{isothermic} if they are additionally conformal, that is, there exists a function $\sigma$ such that
\begin{equation}\label{eq:fundamentalForms}
	\mathrm{I}(u,v)=e^{2\sigma}(\dif{u}^2 + \dif{v}^2) \quad \text{and} \quad \mathrm{II}(u,v)=e^{2\sigma}(k_1 \dif{u}^2 + k_2 \dif{v}^2),
\end{equation}
where $k_1, k_2$ denote the principal curvatures. We use these to define the \emph{mean} and \emph{(extrinsic) Gauss curvature} by
\[
    H=\frac{k_1 + k_2}{2}, \quad K=k_1 k_2,
\]
respectively. As we are assuming umbilic-free, we have $k_1 \neq k_2$. 

We briefly review the structure equations for surfaces in space forms.
\begin{proposition}[{\cite[\S 65]{MR0115134}}]
	Let $X: \Sigma \rightarrow M_\kappa$ be a surface with curvature line coordinates $(u,v)$ and fundamental forms as in \eqref{eq:fundamentalForms2}.
    Then the Codazzi equations are
	\begin{equation}\label{The Codazzi equation for curv line coordinates}
		\frac{k_{1,v}}{k_2-k_1}=\big({\log{\sqrt{E}}}\big)_v \quad \text{and} \quad \frac{k_{2,u}}{k_1-k_2}=\big({\log{\sqrt{G}}}\big)_u.
	\end{equation}
    If $(u,v)$ are conformal coordinates in addition (so that they are isothermic coordinates) with fundamental forms as in \eqref{eq:fundamentalForms}, then the Gauss equation takes the form
	\begin{equation}\label{The Gauss equation for isothermic coordinates}
		\sigma_{uu}+\sigma_{vv}+(\kappa+K) e^{2\sigma}=0.
	\end{equation}
    while the Codazzi equations simplify to
    \begin{equation}\label{The Codazzi equation for isothermic coordinates}
		\frac{k_{1,v}}{k_2-k_1}=\big({\log{\sqrt{E}}}\big)_v = \sigma_v \quad \text{and} \quad \frac{k_{2,u}}{k_1-k_2}=\big({\log{\sqrt{G}}}\big)_u = \sigma_u.
	\end{equation}
\end{proposition}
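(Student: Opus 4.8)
The plan is to derive both families of equations from the integrability conditions of an adapted moving frame, treating the three space forms simultaneously. Realizing $M_\kappa$ as a quadric in flat ambient space ($\R^{4}$ for $\kappa=1$, $\R^{3,1}$ for $\kappa=-1$, or just $\R^3$ for $\kappa=0$), I would first set up the Gauss--Weingarten equations for the frame $\{X_u, X_v, N\}$ along the surface, together with the ambient-normal direction supplied by the position vector $X$ itself. The latter is precisely what encodes the ambient curvature and contributes all terms carrying $\kappa$. Equating mixed partial derivatives of these frame equations and projecting onto the tangent and normal directions then splits the integrability conditions into the Gauss equation and the Codazzi--Mainardi equations. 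The only property of $M_\kappa$ that enters is that its ambient Riemann tensor is $R(Y,Z)W = \kappa(\langle Z,W\rangle Y - \langle Y,W\rangle Z)$.

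For the Codazzi equations I would observe that this curvature tensor, evaluated on tangent arguments, is again tangent, so its normal component vanishes. Consequently the Codazzi--Mainardi equations take exactly the Euclidean form, with no $\kappa$-term. Specializing to curvature line coordinates, where $F=0$ and the mixed coefficient $M$ of $\mathrm{II}$ vanishes, and inserting the Christoffel symbols of the orthogonal metric $E\,\dif{u}^2 + G\,\dif{v}^2$, the two equations reduce to $L_v = \tfrac{1}{2}E_v(k_1+k_2)$ and $N_u = \tfrac{1}{2}G_u(k_1+k_2)$. Writing $L = k_1 E$ and $N = k_2 G$ and cancelling then yields $k_{1,v}/(k_2-k_1) = E_v/(2E) = (\log\sqrt{E})_v$ and $k_{2,u}/(k_1-k_2) = G_u/(2G) = (\log\sqrt{G})_u$, which is \eqref{The Codazzi equation for curv line coordinates}.

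For the Gauss equation I would use its invariant content: the intrinsic Gauss curvature equals the extrinsic one shifted by the ambient curvature, $K_{\mathrm{int}} = K + \kappa$. In isothermic coordinates the metric is conformal, $\mathrm{I} = e^{2\sigma}(\dif{u}^2 + \dif{v}^2)$, so by the Liouville formula $K_{\mathrm{int}} = -e^{-2\sigma}(\sigma_{uu} + \sigma_{vv})$. Equating the two expressions and multiplying by $-e^{2\sigma}$ gives $\sigma_{uu} + \sigma_{vv} + (\kappa + K)e^{2\sigma} = 0$, establishing \eqref{The Gauss equation for isothermic coordinates}. The simplification of the Codazzi equations is then immediate: setting $E = G = e^{2\sigma}$ gives $(\log\sqrt{E})_v = \sigma_v$ and $(\log\sqrt{G})_u = \sigma_u$, which is \eqref{The Codazzi equation for isothermic coordinates}.

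The step requiring the most care is the bookkeeping that isolates the $\kappa$-contribution, namely verifying that the ambient curvature enters only the Gauss equation and leaves the Codazzi equations formally Euclidean. The quadric model makes this transparent, since the position vector $X$, normalized against the ambient metric of the appropriate signature, supplies exactly the curvature terms and no others; the vanishing of the normal component of $R$ on tangent arguments does the rest. Alternatively, one may simply invoke the general Gauss--Codazzi equations for submanifolds and specialize the ambient curvature tensor to constant sectional curvature, which bypasses the explicit frame computation at the cost of less transparency about where $\kappa$ appears.
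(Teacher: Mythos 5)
Your proposal is correct: the Codazzi computation (reducing to $L_v = \tfrac{1}{2}E_v(k_1+k_2)$, $\mathrm{N}_u = \tfrac{1}{2}G_u(k_1+k_2)$ via the Christoffel symbols of an orthogonal metric, then substituting $L=k_1E$, $\mathrm{N}=k_2G$) and the Gauss equation argument (ambient curvature enters only tangentially, so $K_{\mathrm{int}}=\kappa+K$, combined with the Liouville formula for a conformal metric) are both sound, and they correctly isolate where $\kappa$ appears. The paper itself offers no proof of this proposition --- it is quoted from the classical literature --- and your derivation is precisely the standard argument that the cited reference carries out, so there is nothing of substance to compare beyond a minor notational clash in your write-up between the Gauss map and the coefficient $\mathrm{N}$ of the second fundamental form.
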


As rotational surfaces in various space forms are central to our investigations, let us introduce this notion: consider $M_\kappa$ as a quadric in the appropriate ambient space, equipped with the proper inner product ($\R^3$ can be viewed as $\{p: (v,p)=1\}$ with $v\in\R^4$ any non-zero vector). Isometries of $M_\kappa$ are then those orthogonal transformations of the ambient space that fix the space form. We call a surface \emph{rotational} if it is invariant under a $1$-parameter subgroup $\rho$ of isometries that act as the identity on a $2$-plane\footnote{
        Note that for $\R^3$, the plane that is fixed by $\rho$ must contain $v$.} 
$\Pi$ in $\R^4$. For hyperbolic spaces so that $\kappa<0$, the definition amounts to three different types of rotations, depending on the signature of the metric induced on $\Pi$ (for an extensive treatment of rotations in hyperbolic space forms see \cite{MR694383}).
The next example gives explicit parametrizations of rotational surfaces along with their principal curvatures:
\begin{example}\label{exp:rotationalSurfaces}
    For brevity we assume that the induced metric on $\Pi^\perp$ is Riemannian. In this case, the term \emph{surface of revolution} is used more frequently. 
    We can parametrize rotational surfaces in $M_\kappa$ via the action of $\rho$ on a (geodesic) planar profile curve $\gamma = (r, 0, h, k)$ ($k\equiv 1$ for $\kappa=0$). From the  parametrization
    \begin{equation}\label{eq:SOR}
        X(u, v) = \rho(u)(r(v), 0, h(v), k(v)),
    \end{equation}
    and the fact that $\rho$ is a $1$-parameter subgroup, one quickly deduces
    that $\mathrm{I}$ and $\mathrm{II}$ take the form \eqref{eq:fundamentalForms}
    with (we use $^\prime$ to denote $\partial_v$)
    \[
        e^{2\sigma} = r^2,~k_1 = \frac{kh'-hk'}{r^2}, ~k_2 = \frac{1}{r^3} \begin{vmatrix}
            r &r' &r'' \\
            h &h' &h'' \\
            k &k' &\kappa k''
        \end{vmatrix},
    \]
    given a suitable parametrization of $\gamma$. Similar expressions for the other types of rotations in hyperbolic space can be obtained.
\end{example}

Note that for a rotational surface $k_{1,u}=0$. Thus, rotational surfaces are an example of channel surfaces as defined next. 

\begin{definition}
    A surface in a space form $M_\kappa$ is a \emph{channel surface} if one of its principal curvatures is constant along its principal direction. 
\end{definition}

\begin{remark}
    According to Joachimsthal's theorem, this definition is equivalent to the more usual definition of channel surfaces as envelopes of $1$-parameter families of spheres \cite{blaschke1929}.
\end{remark}

\subsection{Linear Weingarten surfaces}\label{sec:linearWeingartenSurfaces}
Next, we introduce the class of linear Weingarten surfaces. There are two different conventions which surfaces to call linear Weingarten: either those for which Gauss and mean curvature satisfy a linear relationship, or those where the two principal curvatures do. We call the latter ``affine Weingarten'' to distinguish these two surface classes.

\begin{definition}
    A surface in a space form $M_\kappa$ is called
    \begin{itemize}
        \item \emph{linear Weingarten} if there exists a non-trivial triple of constants $a,b,c$ such that 
        \begin{equation}\label{eq:LW}
            aK + 2bH + c = 0. \tag{LW}
        \end{equation}
        \item \emph{affine Weingarten} if there exists a non-trivial triple of constants $a,b,c$ such that
        \begin{equation}\label{eq:AW}
            xk_1 + yk_2 + z=0. \tag{AW}
        \end{equation}
    \end{itemize}
\end{definition}

\begin{remark}
    The only surfaces that are linear Weingarten and affine Weingarten are surfaces of constant mean curvature (cmc): set $a=0$ in \eqref{eq:LW} and $x=y$ in \eqref{eq:AW}. 
\end{remark}

We call a linear Weingarten surface $X:\Sigma\to M_\kappa$ \emph{tubular} if $ac-b^2 = 0$. This is equivalent to one of the principal curvatures of $X$ being constant. Therefore, tubular linear Weingarten surfaces are also affine Weingarten ($y=0$) and channel surfaces. We will from now on assume further that the surfaces we consider are non-tubular. 

Regarding channel linear or affine Weingarten surfaces, we have the following result:
\begin{proposition}\label{proposition:channelWeingartenSurfacesAreRotational} 
Let a surface be a non-tubular channel surface.
If we have either
    \begin{itemize}
        \item it is a linear Weingarten surface in a space form $M_\kappa$, or
        \item it is an affine Weingarten surface in $\R^3$,
    \end{itemize}
then the surface must be rotational.
\end{proposition}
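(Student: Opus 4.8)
The plan is to show that the channel hypothesis together with the Weingarten relation forces \emph{both} principal curvatures to depend on only one of the curvature line coordinates, that this makes all fundamental form coefficients independent of the other coordinate, and finally that the resulting translational symmetry is realized by a one-parameter group of ambient isometries which, because of the channel structure, is necessarily a rotation. Concretely, I would fix curvature line coordinates $(u,v)$ and encode the channel hypothesis as $k_{1,u}=0$, so $k_1=k_1(v)$. Differentiating the defining Weingarten relation in $u$ then propagates this to $k_2$: from \eqref{eq:AW} one obtains $y\,k_{2,u}=0$, while from \eqref{eq:LW} one obtains $(a k_1 + b)\,k_{2,u}=0$. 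In either case, vanishing of the bracketed coefficient on an open set would force $k_1$ to be constant (hence the surface tubular) or the defining triple to be trivial; since both are excluded, I conclude $k_{2,u}=0$. Thus both principal curvatures depend on $v$ only. Ruling out the loci where $y$ or $ak_1+b$ vanishes is exactly where the non-tubular assumption is used.

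Next I would feed $k_{1,u}=k_{2,u}=0$ into the Codazzi equations \eqref{The Codazzi equation for curv line coordinates}. From $k_{2,u}=0$ the second equation gives $(\log\sqrt{G})_u=0$, so $G=G(v)$; from the first equation the quantity $(\log\sqrt{E})_v=k_{1,v}/(k_2-k_1)$ depends on $v$ alone, which integrates to $E=A(u)B(v)$. After a coordinate change absorbing $A(u)$ into $u$ (which preserves curvature line coordinates), all four coefficients $E$, $G$, $\mathrm{L}=Ek_1$, $\mathrm{N}=Gk_2$ depend on $v$ only, so the fundamental forms are invariant under the translation $\phi_c\colon(u,v)\mapsto(u+c,v)$.

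Then, by the fundamental theorem of surface theory (rigidity; see \cite{MR0115134}), for each $c$ the surfaces $X$ and $X\circ\phi_c$ share fundamental forms, so there is an isometry $\Phi_c$ of $M_\kappa$ with $X\circ\phi_c=\Phi_c\circ X$; uniqueness makes $c\mapsto\Phi_c$ a one-parameter group of isometries leaving $X$ invariant, whose orbits are precisely the $u$-curves $C_v=\{X(u,v):u\}$. Finally I would identify each $C_v$ as the characteristic circle of the channel surface (the circle along which the enveloped spheres are tangent, cf.\ the Remark following the definition of channel surfaces), which is a genuine circle. Since $\Phi_c$ maps each such circle onto itself, it fixes its center and preserves its plane for every $v$; forcing the centers onto a common line then exhibits $\Phi_c$ as a rotation about that axis in the sense of the paper's definition, i.e.\ $X$ is rotational.

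I expect the last step to be the main obstacle: upgrading ``one-parameter group of isometries with circular orbits'' to ``rotation about a fixed axis'' rigorously requires showing the orbit circles are coaxial — equivalently, ruling out screw-motion (helicoidal) symmetry. The channel structure is exactly what accomplishes this, since screw-motion orbits are non-planar helices whereas the characteristic curves of a channel surface are planar circles. Carrying this out uniformly across $\R^3$ and the two non-flat space forms, where (as noted after Example~\ref{exp:rotationalSurfaces}) several rotation types occur, is the delicate part; for the affine Weingarten case only the Euclidean version is needed, matching the statement.
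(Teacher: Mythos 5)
Your strategy genuinely differs from the paper's. For the affine Weingarten case (the only bullet the paper proves rather than cites), the paper stays local--analytic: having deduced that both curvatures and all coefficients depend on one variable, it constructs isothermic coordinates, writes down an explicit profile pair $(r,h)$ with $h_u = Ek_2$, and applies the fundamental theorem \emph{once} to identify the surface with an explicitly parametrized surface of revolution. You instead use rigidity to generate a one-parameter group of ambient isometries $c\mapsto\Phi_c$ and then classify that group by its orbits. In $\R^3$ your plan can be completed, and more easily than you fear: the $u$-curves are arcs of the characteristic circles of the enveloped sphere family (the curvature sphere $X+N/k_1$ is constant along them exactly because $k_{1,u}=0$, by Codazzi), an isometry preserving a circle fixes its center, and a nontrivial one-parameter subgroup of the Euclidean group that has a fixed point is automatically a rotation group about an axis through that point. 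So the coaxiality/screw-motion issue you single out as the main obstacle dissolves: one circular orbit suffices, and you never need to ``force the centers onto a common line.''

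The genuine gap is in the first bullet, the linear Weingarten case in a general space form $M_\kappa$, which you attempt to prove by the same scheme (the paper handles it by citing \cite{hertrichjeromin2021channel}). For $\kappa>0$ the inference ``one-parameter isometry group with circular orbits, hence rotation'' is false: the Hopf flows $(z_1,z_2)\mapsto(e^{it}z_1,e^{\pm it}z_2)$ on $\R^4\cong\C^2$ have \emph{all} orbits great circles yet fix no $2$-plane pointwise, so they are not rotations in the paper's sense; worse, all of those great circles are centered at the origin, so your step of collecting the centers produces a single point, not an axis. Closing this requires an extra argument, for instance: if the characteristic circles were great circles, the enveloped curvature spheres would be great (totally geodesic) spheres, forcing $k_1\equiv 0$ and hence tubularity, which is excluded. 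For $\kappa<0$ the scheme needs reworking for a different reason: rotational surfaces in the paper's sense include parabolic and hyperbolic rotations, whose orbits are horocycle- and hypercycle-type curves rather than circles, so the dichotomy ``circle versus helix'' must become ``planar versus non-planar curve in $\R^{3,1}$,'' backed by the classification of one-parameter subgroups of $\mathrm{SO}(3,1)$ (only the loxodromic ones have non-planar orbits). You flag this as the delicate part but do not supply it; as written, your proof establishes the proposition only for $\kappa=0$.
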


\begin{proof}
    The first statement is proven in, for instance,  \cite{hertrichjeromin2021channel}. 

    For the second statement, note that by \eqref{eq:AW} $k_1=k_1(u)$ implies that $k_2$ is a function of $u$ alone as well. The Codazzi equations 
    \eqref{The Codazzi equation for isothermic coordinates} then imply $E=E(u)$ and $G=U(u)V(v)$ for suitable functions $U, V$ of one variable.
    Thus by \cite{MR1575524},
    we have that the surface admits isothermic coordinates.
    More precisely, we can set $r=\sqrt{E}$ and define a function $h$ (of $u$) such that $h_u=E k_2$. The Gauss-Codazzi equations then imply that the fundamental forms are
    \begin{align*}
        \mathrm{I}(u,v)= 
            r^2(\dif{u}^2 +\dif{v}^2)  \quad \text{and} \quad \mathrm{II}(u,v)=r^2k_1 \dif{u}^2 + h_u \dif{v}^2,
    \end{align*}
    with $k_1 = \tfrac{r_u^2 - r_{uu}r}{h_u r^2}$. A quick computation shows that the surface of revolution with $r$ and $h$ chosen such that \eqref{eq:SOR} is isothermic ($r^2 = r_u^2 + h_u^2$) has the same fundamental forms, and the claim thus follows from the fundamental theorem. 
\end{proof}

\subsection{Lie minimal surfaces}\label{sec:lieMinimalSurfaces}
Next, we define Lie minimal surfaces $X: \Sigma \rightarrow M_\kappa$.
\begin{definition}[\cite{blaschke1929, MR1918585, MR1756094, MR2186784}]\label{def:LieMin}
	The critical points of a functional
    \begin{align}\label{eq:lieMinimal}
	L_{Lie}[X]:=\int_\Sigma \frac{k_{1,u} k_{2,v}}{(k_1-k_2)^2} \dif{u} \wedge \dif{v}
    \end{align}
	with respect to compactly supported variations are called \emph{Lie minimal surfaces}.
\end{definition}

The functional \eqref{eq:lieMinimal} is introduced in \cite{blaschke1929} as the simplest Lie invariant integral. Thus, Lie sphere transformations map Lie minimal surfaces to Lie minimal surfaces. 

We now state a characterization of Lie minimal surfaces in terms of a pair of differential equations that the principal curvatures of $X$ have to satisfy. 

\begin{lemma}\label{lem:lieMinimalConditions}
	Let $X: \Sigma \rightarrow M_\kappa$ be a curvature line parametrization of a surface in a space form. Then $X$ is Lie minimal if and only if the principal curvatures satisfy
	\begin{equation}\label{eq:lieMinimalConditions}
		(k_2-k_1)k_{1,uv}+2k_{1,u}k_{1,v}=0 \quad \text{and} \quad (k_1-k_2)k_{2,uv}+2k_{2,u}k_{2,v}=0.
	\end{equation}
\end{lemma}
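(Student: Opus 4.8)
The plan is to read \eqref{eq:lieMinimal} as a first-order variational problem whose integrand $\mathcal{L} = k_{1,u}k_{2,v}(k_1-k_2)^{-2}$ depends only on the two functions $k_1,k_2$ and on the single derivatives $k_{1,u}$ and $k_{2,v}$, and to compute its Euler--Lagrange equations directly. First I would record that the $2$-form $\mathcal{L}\,\dif{u}\wedge\dif{v}$ is invariant under reparametrizations $u\mapsto\tilde u(u)$, $v\mapsto\tilde v(v)$ of the curvature-line net (the Jacobian factors cancel against the transformation of $k_{1,u}k_{2,v}$), so that the value of $L_{Lie}$ is independent of the admissible choice of curvature-line coordinates and the problem is well posed. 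Since $k_1\ne k_2$ by the umbilic-free assumption, the factor $(k_1-k_2)$ never vanishes and can be cleared at the very end.

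The core is then the standard Euler--Lagrange computation. Varying with respect to $k_2$ and integrating by parts (boundary terms vanish by compact support) gives $\partial\mathcal{L}/\partial k_2-\partial_v(\partial\mathcal{L}/\partial k_{2,v})=0$; multiplying through by $(k_1-k_2)^3$, the two contributions proportional to $k_{1,u}k_{2,v}$ cancel and one is left precisely with $(k_2-k_1)k_{1,uv}+2k_{1,u}k_{1,v}=0$. Symmetrically, varying with respect to $k_1$ yields $(k_1-k_2)k_{2,uv}+2k_{2,u}k_{2,v}=0$. I would flag the mild surprise that the variation in $k_1$ produces the equation for $k_2$ and vice versa, which simply reflects the symmetry of $\mathcal{L}$ under the exchange $(k_1,u)\leftrightarrow(k_2,v)$. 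This direction already establishes that \eqref{eq:lieMinimalConditions} is \emph{sufficient} for criticality.

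The main obstacle is the converse, namely the legitimacy of treating $k_1$ and $k_2$ as independent fields: the principal curvatures of a genuine immersion are tied together by the Gauss and Codazzi equations \eqref{The Codazzi equation for isothermic coordinates}, so an admissible variation cannot a priori prescribe $\delta k_1$ and $\delta k_2$ freely. A short check, substituting the Codazzi relations $k_{1,v}=(k_2-k_1)\sigma_v$ and $k_{2,u}=(k_1-k_2)\sigma_u$, shows that the two expressions in \eqref{eq:lieMinimalConditions} are \emph{not} equivalent modulo Codazzi (their difference is a nonzero multiple of $k_{1,u}\sigma_v+k_{2,v}\sigma_u$), so one genuinely needs two independent variational directions to force both to vanish. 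I would resolve this by carrying out the variation in the Lie sphere geometric framework rather than through normal variations of $X$ in $M_\kappa$: the natural object is the Legendre lift, whose admissible variations include independent motions of the two enveloping sphere pencils and thereby supply the two required directions. This is exactly the content of the Lie-frame Euler--Lagrange system of \cite{MR1756094, MR1918585}, which I would invoke and then rewrite in terms of the space-form principal curvatures to recover \eqref{eq:lieMinimalConditions}. An alternative, staying within the space-form picture, is to append the Gauss--Codazzi relations as constraints via Lagrange multipliers on the fields $(k_1,k_2,\sigma)$; the harder bookkeeping there is to verify that the multiplier contributions decouple and leave precisely the stated pair.
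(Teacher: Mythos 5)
Your second paragraph is, in substance, the paper's entire proof: the paper computes the formal Euler--Lagrange equations $\partial f/\partial k_1-\partial_u\left(\partial f/\partial k_{1,u}\right)=0$ and $\partial f/\partial k_2-\partial_v\left(\partial f/\partial k_{2,v}\right)=0$ for $f=k_{1,u}k_{2,v}(k_1-k_2)^{-2}$, treating the induced variations $h_1,h_2$ of the principal curvatures as arbitrary independent compactly supported functions, and it obtains exactly the crossed pairing you note (varying $k_1$ produces the equation containing $k_{2,uv}$, and vice versa). So your core computation coincides with the paper's approach.

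The genuine difference is your third paragraph, and it cuts both ways. The paper never raises the question of whether compactly supported variations of the immersion $X$ actually realize two independent variational directions $(h_1,h_2)$; it silently treats $k_1$ and $k_2$ as unconstrained fields, which is precisely the step you question. Your supporting computation is correct: substituting the Codazzi relations $k_{1,v}=(k_2-k_1)\sigma_v$ and $k_{2,u}=(k_1-k_2)\sigma_u$, the difference of the two left-hand sides of \eqref{eq:lieMinimalConditions} reduces to $(k_2-k_1)\left(k_{1,u}\sigma_v+k_{2,v}\sigma_u\right)$, so the two equations are genuinely independent modulo the constraints and the ``only if'' direction does need more than a single admissible direction. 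However, your own resolution of this point is not carried out: you delegate it to the Lie-frame Euler--Lagrange systems of \cite{MR1756094,MR1918585}, or to a Lagrange-multiplier computation you only sketch, without verifying that either reduces to \eqref{eq:lieMinimalConditions}. As written, then, your proposal is the paper's computation plus an honest flag on a step the paper elides -- no less rigorous than the paper's own proof, but also not a completed independent proof of necessity. To make it self-contained you would need to either rewrite the cited Lie sphere Euler--Lagrange system in space-form principal curvatures, or actually perform the constrained variation of the fields $(k_1,k_2,\sigma)$ and check that the multiplier terms decouple.
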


\begin{proof}
Consider a compactly supported variation of the surface $ X $, with variation parameter $ \varepsilon $ independent of $u$ and $v$, that fixes boundaries, and suppose that it results in, for the principal curvatures $ \hat{k}_1, \hat{k}_2 $ at $ \varepsilon $,
	\begin{align*}
		k_1(u,v) &\mapsto \hat{k}_1(u,v) = k_1(u,v) + \varepsilon h_1(u,v) + \mathcal{O}(\varepsilon^2) \\
		k_2(u,v) &\mapsto \hat{k}_2(u,v) = k_2(u,v) + \varepsilon h_2(u,v) + \mathcal{O}(\varepsilon^2)
	\end{align*}
for suitably chosen functions $ h_1,h_2:\Sigma \rightarrow \mathbb{R} $ such that, for $ i = 1,2 $, $ h_i(u,v)=0 $ outside of the compact support of the variation. Setting
	\begin{align*}
		f = f\lbrack k_1, k_2, k_{1,u}, k_{2,v} \rbrack = \frac{k_{1,u} k_{2,v}}{(k_1 - k_2)^2},
	\end{align*}
then
	\begin{multline*}
		\left.\frac{\dif{L}}{\dif{\varepsilon}} \right|_{\varepsilon = 0}
            = \left. \frac{\dif{}}{\dif{\varepsilon}} \int_\Sigma
                f \lbrack \hat{k}_1 (u,v), \hat{k}_2 (u,v),
                \hat{k}_{1,u} (u,v) , \hat{k}_{2,v} (u,v)
                \rbrack \dif{u} \wedge \dif{v} \,
                \right|_{\varepsilon = 0}\\
            \begin{aligned}
                & = \left.
                    \int_\Sigma \left(
                    \frac{\partial f}{\partial k_1} \frac{\partial \hat{k}_1}{\partial \varepsilon}
                    + \frac{\partial f}{\partial k_2} \frac{\partial \hat{k}_2}{\partial \varepsilon}
                    + \frac{\partial f}{\partial k_{1,u}} \frac{\partial \hat{k}_{1,u}}{\partial \varepsilon}
                    + \frac{\partial f}{\partial k_{2,v}} \frac{\partial \hat{k}_{2,v}}{\partial \varepsilon}
                    \right) \dif{u} \wedge \dif{v} \, \right|_{\varepsilon=0} \\
				& = \int_\Sigma \left(
                    \frac{\partial f}{\partial k_1}h_1
                    + \frac{\partial f}{\partial k_2}h_2
                    + \frac{\partial f}{\partial k_{1,u}}h_{1,u}
                    + \frac{\partial f}{\partial k_{2,v}}h_{2,v}
                    \right) \dif{u} \wedge \dif{v} \\
				& =\int_\Sigma \left(
                    \left(
                    \frac{\partial f}{\partial k_1}
                    - \frac{\partial}{\partial u} \frac{\partial f}{\partial k_{1,u}}
                    \right) h_1
                    + \left(
                    \frac{\partial f}{\partial k_2}
                    - \frac{\partial}{\partial v} \frac{\partial f}{\partial k_{2,v}}
                    \right)h_2
                    \right) \dif{u} \wedge \dif{v},
			\end{aligned}
    \end{multline*}
using the compact support condition. To find the critical points, we want $ \dif{L}/\dif{\varepsilon} \vert_{\varepsilon = 0} $ to be zero for arbitrary $ h_1 $ and $ h_2 $, so we have to show the stationary condition
        \begin{empheq}[left = {\empheqlbrace \,}]{alignat = 2}
					& \frac{\partial f}{\partial k_1} - \frac{\partial}{\partial u}\frac{\partial f}{\partial k_{1,u}} = 0, \label{EL1} \\
          & \frac{\partial f}{\partial k_2} - \frac{\partial}{\partial v}\frac{\partial f}{\partial k_{2,v}} = 0. \label{EL2}
				\end{empheq}
				These equations are called the \emph{Euler-Lagrange (differential) equations}. 
    
            Because
				\begin{align*}
					\frac{\partial f}{\partial k_1} & = -\frac{2k_{1,u}k_{2,v}}{\left(k_1 - k_2 \right)^3}, \\
					\frac{\partial f}{\partial k_2} & = \frac{2k_{1,u}k_{2,v}}{\left(k_1 - k_2 \right)^3}, \\
					\frac{\partial}{\partial u}\frac{\partial f}{\partial k_{1,u}} & = \frac{k_{2,uv}}{\left(k_1 - k_2 \right)^2} -
					\frac{2k_{2,v}\left(k_{1,u} - k_{2,u} \right)}{\left(k_1 - k_2 \right)^3}, \quad \text{and} \\
					\frac{\partial}{\partial v}\frac{\partial f}{\partial k_{2,v}} & = \frac{k_{1,uv}}{\left(k_1 - k_2 \right)^2} - \frac{2k_{1,u}\left(k_{1,v} - k_{2,v} \right)}{\left(k_1 - k_2 \right)^3},
				\end{align*}
				from \eqref{EL1} we have
				\begin{align}
					& 0 = -\frac{2k_{1,u}k_{2,v}}{\left(k_1 - k_2 \right)^3} - \frac{k_{2,uv}}{\left(k_1 - k_2 \right)^2} + \frac{2k_{2,v}\left(k_{1,u} - k_{2,u} \right)}{\left(k_1 - k_2 \right)^3} \notag\\
					& \therefore \quad \frac{(k_1-k_2)k_{2,uv}+2k_{2,u}k_{2,v}}{(k_1-k_2)^3} = 0.  \label{eq1}
				\end{align}
				Similarly, from \eqref{EL2} we have
				\begin{align}
					& 0 = \frac{2k_{1,u}k_{2,v}}{\left(k_1 - k_2 \right)^3} - \frac{k_{1,uv}}{\left(k_1 - k_2 \right)^2} + \frac{2k_{1,u}\left(k_{1,v} - k_{2,v} \right)}{\left(k_1 - k_2 \right)^3} \notag\\
					& \therefore \quad	\frac{(k_2-k_1)k_{1,uv}+2k_{1,u}k_{1,v}}{(k_1-k_2)^3} = 0. \label{eq2}
				\end{align}
				As we are assuming umbilic-free, the numerators of \eqref{eq1} and \eqref{eq2} lead to the desired conclusions \eqref{eq:lieMinimalConditions}.
\end{proof}

\begin{corollary}\label{Corollary: surface of revolution is Lie minimal}
	Every rotational surface in $M_\kappa$ is Lie minimal.
\end{corollary}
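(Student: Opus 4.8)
The plan is to invoke the characterization of Lie minimality furnished by Lemma~\ref{lem:lieMinimalConditions} and to verify the two differential equations \eqref{eq:lieMinimalConditions} directly. The decisive observation is that, for a rotational surface, \emph{both} principal curvatures depend only on the profile parameter $v$ and not on the rotation parameter $u$; once this is established, every term appearing in \eqref{eq:lieMinimalConditions} vanishes identically, so no genuine computation remains.

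First I would make the $u$-independence of the principal curvatures precise. For a surface of revolution of the form \eqref{eq:SOR} this is immediate from the explicit formulas in Example~\ref{exp:rotationalSurfaces}: there $k_1$ and $k_2$ are expressed purely in terms of $r, h, k$ and their $v$-derivatives, all of which are functions of $v$ alone, whence $k_{1,u}=k_{2,u}=0$. To cover the remaining types of rotation in hyperbolic space uniformly (where the induced metric on $\Pi^\perp$ is not Riemannian, and the explicit parametrization changes), I would instead argue invariantly: the rotation $\rho(u)$ is a one-parameter group of ambient isometries preserving $X(\Sigma)$, and since $\rho$ is a subgroup, it sends $X(u,v)$ to $X(u+u_0,v)$, mapping the $u$-curves onto one another while preserving their principal-direction structure. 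As principal curvatures are invariant under such isometries, $k_1$ and $k_2$ are constant along the orbits of $\rho$, i.e.\ along the $u$-curves, giving $k_{1,u}=k_{2,u}=0$ in all cases at once.

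With $k_{1,u}=k_{2,u}=0$ in hand, it follows that $k_{1,uv}=(k_{1,u})_v=0$ and likewise $k_{2,uv}=0$. Substituting into \eqref{eq:lieMinimalConditions}, the first equation reads $(k_2-k_1)\cdot 0 + 2\cdot 0 \cdot k_{1,v}=0$ and the second $(k_1-k_2)\cdot 0 + 2\cdot 0\cdot k_{2,v}=0$; both hold identically, so by Lemma~\ref{lem:lieMinimalConditions} the surface $X$ is Lie minimal. I anticipate essentially no analytic obstacle: the single point requiring care is justifying $k_{1,u}=k_{2,u}=0$ for all three types of hyperbolic rotation rather than only for the Riemannian case treated explicitly in Example~\ref{exp:rotationalSurfaces}, and the isometry-invariance argument above handles this uniformly in one stroke.
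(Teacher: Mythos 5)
Your proposal is correct and follows essentially the same route as the paper: the paper's proof likewise invokes Lemma~\ref{lem:lieMinimalConditions} and observes, citing Example~\ref{exp:rotationalSurfaces}, that $k_{1,u}=k_{2,u}=0$ for rotational surfaces, so \eqref{eq:lieMinimalConditions} holds trivially. Your additional isometry-invariance argument covering all three types of hyperbolic rotations is a nice touch of rigor that the paper leaves implicit (the example only remarks that similar expressions can be obtained), but it does not change the substance of the argument.
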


\begin{proof}
    As we have seen in Example~\ref{exp:rotationalSurfaces}, $k_{1,u} = k_{2,u}=0$ for rotational surfaces, hence, \eqref{eq:lieMinimalConditions} is satisfied. 
\end{proof}
\section{Lie minimal cmc surfaces}\label{sec:lieMinimalcmc}

In this section we consider the relationship between Lie minimal and cmc
$H$ (or minimal) surfaces. Recall that every cmc (or minimal) surface admits isothermic coordinates, which we will denote by $(u,v)$.

\begin{lemma}\label{Lemma: cmc k1k2}
	Let $X : \Sigma \rightarrow M_\kappa$ be Lie minimal with cmc $H$. Then, the principal curvatures of $X$ satisfy
    $$k_1-k_2=\alpha(u)\beta(v),$$ 
    where $\alpha$ (resp. $\beta$) is a function only depending on $u$ (resp. $v$).
\end{lemma}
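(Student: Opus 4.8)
The plan is to exploit the constancy of $H$ to collapse the two Lie minimal equations of Lemma~\ref{lem:lieMinimalConditions} into a single second-order PDE for the difference $w := k_1 - k_2$, and then to recognize that PDE as the separability condition for $w$.

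First I would use the cmc hypothesis. Since $k_1 + k_2 = 2H$ is constant, writing $w = k_1 - k_2$ gives $k_1 = H + \tfrac{1}{2}w$ and $k_2 = H - \tfrac{1}{2}w$, so that every derivative of $k_1$ equals the corresponding derivative of $\tfrac{1}{2}w$ and every derivative of $k_2$ its negative. Substituting these into either of the two equations in \eqref{eq:lieMinimalConditions}---for instance $(k_2-k_1)k_{1,uv}+2k_{1,u}k_{1,v}=0$ becomes $-\tfrac{1}{2}w\,w_{uv}+\tfrac{1}{2}w_u w_v = 0$---both equations collapse to the single relation
\[
 w\, w_{uv} = w_u w_v.
\]
(The second equation produces exactly the same relation, which is the source of the rigidity in the cmc case.)

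Next I would recognize this as a separability condition. Because $X$ is umbilic-free we have $k_1 \neq k_2$ everywhere, so $w$ is nowhere zero and hence of constant sign on the connected parameter domain. Setting $\psi := \log\abs{w}$, a direct computation gives
\[
 \psi_{uv} = \frac{w\, w_{uv} - w_u w_v}{w^2} = 0.
\]
A function with vanishing mixed partial derivative separates additively, so $\psi(u,v) = f(u) + g(v)$ for single-variable functions $f, g$. Exponentiating yields $\abs{w} = e^{f(u)}e^{g(v)}$, and absorbing the constant sign of $w$ into one factor gives $w = \alpha(u)\beta(v)$ with $\alpha(u) = \pm e^{f(u)}$ and $\beta(v) = e^{g(v)}$, which is the claim.

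I do not expect a genuine obstacle here. The only points requiring care are that both Euler--Lagrange equations really reduce to the \emph{same} PDE (so that no additional constraint is silently lost), and that umbilic-freeness guarantees $w$ keeps a constant sign so that $\log\abs{w}$ is well defined and smooth on the whole connected domain. The crux of the argument is simply the observation that $\log\abs{w}$ has vanishing mixed second derivative.
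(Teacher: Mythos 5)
Your proposal is correct and follows essentially the same route as the paper: both arguments reduce the Lie minimal equations under the cmc hypothesis to the vanishing of $\bigl(\log(k_1-k_2)\bigr)_{uv}$, and then conclude separability $k_1-k_2=\alpha(u)\beta(v)$ by exponentiating. The only cosmetic difference is that you substitute $k_{1}=H+\tfrac12 w$, $k_{2}=H-\tfrac12 w$ at the outset, whereas the paper first derives the identity $\bigl(\log(k_1-k_2)\bigr)_{uv}=H_uH_v/(k_1-k_2)^2$ for any Lie minimal surface and then uses constancy of $H$; both collapse to the same computation.
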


\begin{proof}
	Suppose that $X$ is Lie minimal, and let $k_1-k_2>0$ without loss of generality. Since $X$ is Lie minimal, a simple calculation and Lemma~\ref{lem:lieMinimalConditions} show
    \begin{align*}
        (\log(k_1-k_2))_{uv}=\frac{H_u H_v}{(k_1 - k_2)^2},
    \end{align*}
    which vanishes because $H$ is constant. We conclude that
	\begin{align*}
		\log(k_1-k_2)&=\tilde{\alpha}(u)+\tilde{\beta}(v)\\
		\therefore \quad k_1-k_2&=\alpha(u)\beta(v),
	\end{align*}
	where $\alpha$ (resp. $\beta$) is a function only depending on $u$ (resp. $v$) and $e^{\tilde{\alpha}(u)}=\alpha(u)$ (resp. $e^{\tilde{\beta}(v)}=\beta(v)$).
\end{proof}

\begin{theorem}\label{Theorem: cmc Lie minimal}
    Let $X:\Sigma \to M_\kappa$ be a constant mean curvature $H$ surface. Then, $X$ is Lie minimal if and only if it is a rotational surface. 
\end{theorem}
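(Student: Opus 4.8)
The plan is to prove the two directions separately. The ``if'' direction is immediate from Corollary~\ref{Corollary: surface of revolution is Lie minimal}: every rotational surface is Lie minimal, and rotational cmc surfaces certainly exist, so a rotational cmc surface is automatically Lie minimal. Hence the substance lies in the ``only if'' direction, where I assume $X$ is a cmc $H$ surface that is also Lie minimal, and I want to conclude it is rotational. By Proposition~\ref{proposition:channelWeingartenSurfacesAreRotational}, a cmc surface (which is in particular linear Weingarten, taking $a=0$ in \eqref{eq:LW}) that is a non-tubular channel surface must be rotational. So the natural strategy is to show that a Lie minimal cmc surface is a channel surface, i.e.\ that one of the principal curvatures is constant along its principal direction, say $k_{1,u}\equiv 0$.

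First I would exploit Lemma~\ref{Lemma: cmc k1k2}, which gives $k_1 - k_2 = \alpha(u)\beta(v)$ for functions $\alpha,\beta$ of one variable each. Because $H = \tfrac12(k_1+k_2)$ is constant, I can solve for the principal curvatures individually:
\begin{equation*}
    k_1 = H + \tfrac12\alpha(u)\beta(v), \qquad k_2 = H - \tfrac12\alpha(u)\beta(v).
\end{equation*}
This is the crucial reduction: the entire second-fundamental-form data is now encoded in the single separated product $\alpha(u)\beta(v)$, and both principal curvatures have the same separated structure up to sign. Differentiating, $k_{1,u} = \tfrac12\alpha'(u)\beta(v)$ and $k_{1,v}=\tfrac12\alpha(u)\beta'(v)$, with the analogous expressions for $k_2$ carrying opposite signs.

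Next I would substitute these into the Lie minimality conditions \eqref{eq:lieMinimalConditions}. Computing $k_{1,uv} = \tfrac12\alpha'(u)\beta'(v)$ and plugging into $(k_2-k_1)k_{1,uv}+2k_{1,u}k_{1,v}=0$ yields, after dividing by the nonvanishing $\tfrac12\beta$ and $\tfrac12\alpha$ factors (umbilic-free guarantees $\alpha\beta\neq0$), a relation of the form $-\alpha\beta\cdot\alpha'\beta' + \alpha'\beta\cdot\alpha\beta' = 0$; the two terms should combine to force $\alpha'(u)\beta'(v)=0$ identically, and the second equation of \eqref{eq:lieMinimalConditions} gives the same conclusion. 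Thus either $\alpha'\equiv 0$ or $\beta'\equiv 0$. If $\alpha'\equiv0$ then $k_{1,u}=0$ and $k_{2,u}=0$, so $X$ is a channel surface with respect to $u$; if $\beta'\equiv0$ then symmetrically $k_{1,v}=k_{2,v}=0$ and $X$ is a channel surface with respect to $v$. The tubular case is excluded because a non-umbilic cmc channel surface is non-tubular (a tubular surface has one constant principal curvature, which together with constant $H$ would force both constant, hence umbilic). Invoking Proposition~\ref{proposition:channelWeingartenSurfacesAreRotational} then completes the proof that $X$ is rotational.

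The main obstacle I anticipate is verifying cleanly that the two equations in \eqref{eq:lieMinimalConditions} genuinely force the product $\alpha'\beta'$ to vanish rather than merely constraining it. The risk is that the terms might cancel to produce a nontrivial separable ODE (such as $\alpha'/\alpha = \text{const}\cdot \beta'/\beta$) instead of a clean product-zero condition; I would want to check the algebra carefully to confirm the coefficient structure collapses to $\alpha'\beta' = 0$. A secondary but minor point is handling the space-form dependence: the argument above uses only the Codazzi structure and Lemma~\ref{Lemma: cmc k1k2}, both of which hold uniformly in $\kappa$, so the Gauss equation \eqref{The Gauss equation for isothermic coordinates} does not enter the channel-surface deduction, and the conclusion follows in all three space forms at once.
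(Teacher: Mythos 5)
There is a genuine gap at the crucial step, and it is exactly the one you flagged as a risk: substituting $k_1 = H + \tfrac12\alpha\beta$, $k_2 = H - \tfrac12\alpha\beta$ into \eqref{eq:lieMinimalConditions} does \emph{not} force $\alpha'\beta'=0$; it yields an identity. Indeed,
\[
(k_2-k_1)k_{1,uv}+2k_{1,u}k_{1,v} \;=\; -\alpha\beta\cdot\tfrac12\alpha'\beta' \;+\; 2\cdot\tfrac12\alpha'\beta\cdot\tfrac12\alpha\beta' \;=\; -\tfrac12\alpha\alpha'\beta\beta' + \tfrac12\alpha\alpha'\beta\beta' \;=\; 0
\]
identically, and the second equation collapses the same way; your own displayed relation $-\alpha\beta\,\alpha'\beta' + \alpha'\beta\cdot\alpha\beta' = 0$ is vacuous, since both terms are the same monomial $\alpha\alpha'\beta\beta'$. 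This is no accident: for a cmc surface, writing $w=k_1-k_2$, both equations of \eqref{eq:lieMinimalConditions} reduce to $-\tfrac12 w^2 (\log w)_{uv}=0$, so Lie minimality is \emph{equivalent} to the separated form $w=\alpha(u)\beta(v)$. Lemma~\ref{Lemma: cmc k1k2} already extracts the full content of the Euler--Lagrange equations, and feeding its conclusion back into them cannot produce new information; separated products with $\alpha'\beta'\not\equiv 0$ satisfy both equations, so your argument cannot conclude that the surface is channel.

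The missing ingredient is the requirement that $(k_1,k_2,\sigma)$ actually arise from a surface in $M_\kappa$, i.e.\ the Gauss equation --- precisely the input your final paragraph declares unnecessary. The paper's proof runs as follows: from the Codazzi equations \eqref{The Codazzi equation for isothermic coordinates} and constancy of $H$ one integrates to $e^{2\sigma}=\gamma/(k_1-k_2)=\gamma/(\alpha\beta)$; substituting into the Gauss equation \eqref{The Gauss equation for isothermic coordinates} gives $-\tfrac12(\tilde{\alpha}''+\tilde{\beta}'')+(\kappa+K)e^{2\sigma}=0$, and applying $\partial_u\partial_v$ annihilates the first summand and leaves, up to a positive factor, $\alpha'\beta'(\kappa+K)=0$. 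Then $\alpha'\beta'=0$ gives a channel surface, hence rotational by Proposition~\ref{proposition:channelWeingartenSurfacesAreRotational}, while $\kappa+K=0$ together with constant $H$ forces both principal curvatures to be constant, which is the tubular case handled separately. A secondary, smaller error: your parenthetical claim that tubular plus cmc forces umbilic is false --- a circular cylinder is tubular, cmc and non-umbilic; the tubular case must be excluded by hypothesis (as in the paper) or treated by observing that surfaces with both principal curvatures constant are rotational anyway.
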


\begin{proof}
	It follows from Corollary~\ref{Corollary: surface of revolution is Lie minimal} that every rotational surface is Lie minimal. 
 
    Suppose conversely that $X$ has cmc $H$ and is Lie minimal and is non-tubular. By solving the Codazzi equation \eqref{The Codazzi equation for isothermic coordinates}, we find
	\begin{equation*}
		\sigma = \frac{1}{2} \left(\log \gamma -\log(k_1-k_2)\right) \quad \therefore \quad e^{2\sigma}=\frac{\gamma}{k_1-k_2},
	\end{equation*}
	where $\gamma$ is some positive constant. Because of Lemma~\ref{Lemma: cmc k1k2}, we can write this as
	\[\sigma = \frac{1}{2}\left(\log \gamma-\big(\tilde{\alpha}+\tilde{\beta}\big)\right) \quad \text{and} \quad \quad e^{2\sigma}=\frac{\gamma}{\alpha \beta},\]
	and using the Gauss equation \eqref{The Gauss equation for isothermic coordinates}, we have
	\[-\frac{1}{2}\left(\tilde{\alpha}''+\tilde{\beta}''\right)+\left(\kappa+K\right)e^{2\sigma}=0. \]
	Now we differentiate both sides with respect to $u$ and $v$ and obtain
	\[0=\frac{\alpha'\beta'}{4\alpha^2\beta^2}(\kappa +K ).\]
	Thus there are two possibilities. On the one hand, if $\alpha'\beta'=0$, then without loss of generality, $\alpha$ is constant and $\sigma, k_1, k_2$ only depend on $v$. The surface is thus a channel linear Weingarten surface and by Proposition~\ref{proposition:channelWeingartenSurfacesAreRotational}, rotational.

	On the other hand, if $\kappa+K=0$, we have
	\[(k_1, k_2)=\left(H+\sqrt{H^2+\kappa},H-\sqrt{H^2+\kappa}\right),\]
	where we assumed $k_1-k_2>0$ without loss of generality.
    Since both principal curvatures are constant, the surface is tubular. 
\end{proof}

A complete classification of Lie minimal cmc surfaces is thus provided by explicit parametrizations of all rotational cmc surfaces (see for instance \cite{polly2023rotational}). 

\begin{corollary}
    Let $X:\Sigma \to \R^3$ be Lie minimal and minimal and not part of a plane. Then, $X$ is (part of) the catenoid. 
\end{corollary}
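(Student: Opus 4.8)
The plan is to combine the preceding Theorem~\ref{Theorem: cmc Lie minimal} with the classical classification of rotational minimal surfaces in $\R^3$. A minimal surface has $H = 0$, which is certainly constant, so $X$ is a cmc surface with $H = 0$. By Theorem~\ref{Theorem: cmc Lie minimal}, since $X$ is assumed Lie minimal, it must be rotational. Thus the problem reduces to identifying which rotational surfaces in $\R^3$ are minimal.

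First I would invoke Theorem~\ref{Theorem: cmc Lie minimal} directly to conclude that $X$ is rotational in $\R^3$. Since we are told $X$ is not part of a plane, and the plane is the only other minimal surface that could arise as a degenerate rotational surface, we may restrict attention to genuine (non-planar) surfaces of revolution. The next step is to write down the condition $H = 0$ for a surface of revolution. Using the parametrization from Example~\ref{exp:rotationalSurfaces} with $\kappa = 0$ (so $k \equiv 1$, and the profile curve is $\gamma = (r, 0, h, 1)$), the principal curvatures reduce to $k_1 = h'/r^2$ (up to the appropriate normalization) and a corresponding expression for $k_2$ from the determinant formula. Imposing $k_1 + k_2 = 0$ yields an ordinary differential equation for the profile curve.

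The classical fact is that the only non-planar minimal surface of revolution in $\R^3$ is the catenoid: solving the minimal-surface ODE $r r'' - (r')^2 - 1 = 0$ (or its equivalent in whatever parametrization is adopted, after imposing that the profile be parametrized appropriately) forces the profile curve to be a catenary $r(v) = a\cosh((v - v_0)/a)$, whose surface of revolution is the catenoid. I would either reproduce this short ODE computation or simply cite it as the well-known Euler classification of minimal surfaces of revolution. This step is entirely routine, so the only real content is the reduction via Theorem~\ref{Theorem: cmc Lie minimal}.

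The main (and essentially only) obstacle is bookkeeping rather than mathematics: one must be careful that the rotational surfaces delivered by Theorem~\ref{Theorem: cmc Lie minimal} are genuinely surfaces of revolution in the Euclidean sense (i.e.\ that the fixed $2$-plane $\Pi^\perp$ carries a Riemannian metric, as in Example~\ref{exp:rotationalSurfaces}), and to confirm that no exotic rotation type arises in the flat case $\kappa = 0$. Once that is settled, the catenoid emerges immediately as the unique non-planar solution, and the corollary follows. I expect the cleanest write-up to be a two-line argument: apply Theorem~\ref{Theorem: cmc Lie minimal} to get rotationality, then cite the classical uniqueness of the catenoid among non-planar minimal surfaces of revolution.
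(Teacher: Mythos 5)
Your proposal is correct and follows exactly the paper's own argument: apply Theorem~\ref{Theorem: cmc Lie minimal} (with $H=0$ constant) to conclude $X$ is rotational, then invoke the classical fact that the catenoid is the only non-planar minimal surface of revolution in $\R^3$. The extra ODE derivation of the catenary and the check on rotation types are harmless elaborations of what the paper simply cites as well known.
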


\begin{proof}
    If $X$ is not restricted to a plane it is rotational according to Theorem~\ref{Theorem: cmc Lie minimal}. Since the catenoid is the only rotational minimal surface in $\R^3$ this finishes the proof. 
\end{proof}
\section{Lie minimal Weingarten surfaces}\label{sec:lieMinimalWeingarten}
In this section we study Lie minimal linear and affine Weingarten surfaces in $\R^3$. Many of our results extend to other space forms, but in order to give a streamlined presentation we restrict ourselves to $\kappa=0$. 

First, we consider the class of linear Weingarten surfaces. For a surface $X:\Sigma\to\R^3$ satisfying \eqref{eq:LW}, we define $\Delta = b^2 - ac$. The surface $X^t = X+t N$, where $N$ denotes the Gauss map of $X$, is called the \emph{parallel surface at distance $t$}. It is well-known that obtaining parallel surfaces amounts to applying certain Lie sphere transformations \cite[Sec 4.4]{cecil}, often referred to as \emph{parallel transformations}.

\begin{proposition}[Bonnet's theorem {\cite[Section 3.4]{MR972503}}]\label{prop:Bonnet}
    Every surface parallel to a linear Weingarten surface is again linear Weingarten satisfying the linear Weingarten condition
    \[
        a^t K^t + 2b^t H^t + c^t = 0,
    \]
    with
    \[
        \Delta^t = (b^t)^2 - a^tc^t = \Delta.
    \]
    Every linear Weingarten surface with $\Delta>0$ is parallel to either
        \begin{itemize}
            \item a surface of positive constant Gauss curvature and hence a pair of cmc surfaces, or
            \item a minimal surface.
        \end{itemize}
\end{proposition}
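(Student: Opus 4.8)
The plan is to reduce the whole statement to the elementary transformation law of principal curvatures under parallel transformation, followed by a short case analysis. First I would record how the curvatures change: writing $N$ for the Gauss map and $X^t = X + tN$, one has $\dif X^t = (\mathrm{id} - t\,S)\,\dif X$ for the shape operator $S$, so the principal curvatures transform by the Möbius map $k_i \mapsto k_i^t = \tfrac{k_i}{1 - t k_i}$ (in the sign convention of \eqref{eq:fundamentalForms}, and away from the focal set where $1 - t k_i = 0$). Setting $D = (1-tk_1)(1-tk_2) = 1 - 2tH + t^2K$, this yields $K^t = K/D$ and $H^t = (H - tK)/D$.

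For the first claim I would substitute these into the candidate relation and clear $D$. A direct expansion gives the identity
\[
 D\bigl(a^tK^t + 2b^tH^t + c^t\bigr) = aK + 2bH + c,
\]
provided one sets $a^t = a + 2bt + ct^2$, $b^t = b + ct$, and $c^t = c$: all terms linear and quadratic in $t$ cancel. Hence $X^t$ satisfies \eqref{eq:LW} with this triple, which is nontrivial (if $a^t, b^t, c^t$ all vanished, then in turn $c$, $b$, $a$ would vanish). A one-line computation then gives $(b^t)^2 - a^tc^t = b^2 - ac$, i.e. $\Delta^t = \Delta$.

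For the second claim, given $\Delta = b^2-ac > 0$ I would choose $t$ to normalise the transformed relation, using that $c^t = c$ makes the vanishing of $c$ a parallel-invariant governing the dichotomy. If $c \neq 0$, taking $t = -b/c$ yields $b^t = 0$, $c^t = c$, and $a^t = -\Delta/c \neq 0$, so the relation reduces to $a^tK^t + c = 0$; thus $X^t$ has constant Gauss curvature $K^t = c^2/\Delta$, which is positive exactly because $\Delta > 0$. If instead $c = 0$, then $\Delta = b^2 > 0$ forces $b \neq 0$, and taking $t = -a/(2b)$ gives $a^t = 0 = c^t$ with $b^t = b \neq 0$, so the relation becomes $2b^tH^t = 0$ and $X^t$ is minimal. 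To upgrade the positive constant Gauss curvature case to a pair of cmc surfaces I would apply the same construction once more: a surface with $K = R^{-2}$ has $(a,b,c) = (1,0,-R^{-2})$, and the two choices $t = \pm R$ each force $a^t = 0$, hence constant $H^t = \mp(2R)^{-1}$.

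None of the individual computations is hard; the real work is organising the case analysis so that it is exhaustive and tracking signs carefully, so that $\Delta > 0$ genuinely delivers \emph{positive} Gauss curvature rather than a flat or negatively curved normal form. The one geometric point I would keep in sight is the focal-point caveat, i.e. ensuring that the chosen $X^t$ is a regular immersion on the relevant domain, so that the transformed relation describes an honest surface.
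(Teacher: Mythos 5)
Your proof is correct, but note that the paper itself contains no proof of this proposition: it is quoted as a classical result (Bonnet's theorem) with a citation to \cite[Section 3.4]{MR972503}, and the text immediately proceeds to apply it in Theorem~\ref{thm:LieMinimalLW}. Your computation therefore supplies exactly the argument the paper delegates to the literature, and it is the standard one. All the individual steps check out: with $k_i^t = k_i/(1-tk_i)$ and $D = 1-2tH+t^2K$ one indeed gets $K^t = K/D$, $H^t = (H-tK)/D$, and the coefficient transformation $(a,b,c)\mapsto(a+2bt+ct^2,\, b+ct,\, c)$ makes $D\bigl(a^tK^t+2b^tH^t+c^t\bigr) = aK+2bH+c$ with all $t$-terms cancelling; the invariance $(b^t)^2-a^tc^t = b^2-ac$ is immediate; the normalisations $t=-b/c$ (giving $K^t = c^2/\Delta>0$) and $t=-a/(2b)$ when $c=0$ (giving $H^t=0$) exhaust the cases since $\Delta>0$ rules out $b=c=0$; and the distances $t=\pm R$ from a $K=R^{-2}$ surface give the two cmc parallels $H^t = \mp(2R)^{-1}$. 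One small point worth making explicit in that last step: you use that parallel transformations compose, $(X^s)^{s'} = X^{s+s'}$ (same Gauss map), so that the original surface is genuinely parallel to the two cmc surfaces and not merely parallel to a surface that is parallel to them. Your focal-set caveat is also apposite and is glossed over both in the classical statement and in the paper's use of it; in the Lie sphere geometric setting of this paper the issue is harmless, since parallel transformation acts as a Lie sphere transformation on the Legendre lift, which stays regular even where the point surface $X^t$ develops singularities at $1-tk_i=0$.
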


As mentioned in Subsection~\ref{sec:lieMinimalSurfaces}, Lie minimality is preserved under Lie sphere transformations, and hence also preserved under parallel transformations.
These are exactly the transformations that map a surface to one of its parallel surfaces. Thus, we arrive at the following theorem. 

\begin{theorem}\label{thm:LieMinimalLW}
    Every non-tubular Lie minimal linear Weingarten surface in $\R^3$ with $\Delta>0$ is a surface of revolution. 
\end{theorem}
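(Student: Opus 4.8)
The plan is to reduce the statement to the cmc case already settled in Theorem~\ref{Theorem: cmc Lie minimal} by exploiting that both membership in the linear Weingarten class and Lie minimality are well-behaved under parallel transformations. Starting from a non-tubular Lie minimal linear Weingarten surface $X:\Sigma\to\R^3$ with $\Delta = b^2 - ac > 0$, Bonnet's theorem (Proposition~\ref{prop:Bonnet}) guarantees that some parallel surface $X^{t_0}$ of $X$ is either a cmc surface or a minimal surface. Since a minimal surface is the special cmc case $H=0$, in either case $X^{t_0}$ has constant mean curvature, and by Proposition~\ref{prop:Bonnet} it still satisfies $\Delta^{t_0} = \Delta > 0$, keeping us in the non-tubular regime.

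Next I would argue that $X^{t_0}$ inherits Lie minimality from $X$. The passage to a parallel surface is effected by a Lie sphere transformation \cite[Sec 4.4]{cecil}, and the functional $L_{\textrm{Lie}}$ of Definition~\ref{def:LieMin} is invariant under the full Lie sphere group; hence Lie minimality is preserved and $X^{t_0}$ is a Lie minimal cmc surface. Applying Theorem~\ref{Theorem: cmc Lie minimal} then shows that $X^{t_0}$ is rotational, that is, a surface of revolution in $\R^3$.

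Finally, I would transport the rotational symmetry back to $X$. Because $X$ is itself a parallel surface of $X^{t_0}$, it suffices to know that parallel transformations preserve surfaces of revolution. This is geometrically transparent: if $X^{t_0}$ is invariant under a one-parameter subgroup $\rho$ of rotations of $\R^3$, then its unit normal field is $\rho$-equivariant, so $X = X^{t_0} - t_0 N$ is invariant under the same $\rho$ and shares the same axis. Hence $X$ is a surface of revolution, as claimed.

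The main obstacle I anticipate is bookkeeping rather than conceptual: one must verify that the parallel distance $t_0$ supplied by Bonnet's theorem does not push us through a focal locus (where $1+t_0 k_i = 0$) and thereby introduce umbilics or singularities that would invalidate the curvature-line and isothermic coordinate setup on which Theorem~\ref{Theorem: cmc Lie minimal} rests, and that the non-tubular hypothesis transfers correctly — which it does, precisely because $\Delta^{t_0}=\Delta>0$. Once these points are checked, the result follows by assembling three previously established facts: Bonnet's reduction to the cmc case, the Lie invariance of $L_{\textrm{Lie}}$, and the cmc classification of Theorem~\ref{Theorem: cmc Lie minimal}.
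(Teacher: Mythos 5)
Your proposal is correct and follows essentially the same route as the paper: pass to a parallel cmc (or minimal) surface via Bonnet's theorem, use the Lie sphere invariance of $L_{\textrm{Lie}}$ to transfer Lie minimality, apply Theorem~\ref{Theorem: cmc Lie minimal}, and pull the rotational symmetry back through $X = X^{t_0} - t_0 N$ using the equivariance of the Gauss map. The extra care you take about the non-tubular hypothesis and focal points is a reasonable refinement, but the argument itself matches the paper's proof.
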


\begin{proof}
    Assume that $X:\Sigma\to\R^3$ is a Lie minimal linear Weingarten surface with $\Delta>0$. Then, there is a value $t_0$ such that the parallel surface $X^{t_0}$ is Lie minimal and cmc. According to Theorem~\ref{Theorem: cmc Lie minimal}, $X^{t_0}$ is rotational and thus has a rotational Gauss map $N$. Therefore, $X=X^{t_0} - t_0 N$ is rotational as well. 
\end{proof}

Finally, we consider the case of affine Weingarten surfaces. Recall, that a surface $X:\Sigma \to \R^3$ is called affine Weingarten if there are constants $x,y,y$ such that
\begin{equation}\label{Equation of affine linear Weingarten surface}
    xk_1+yk_2+z=0.
\end{equation}
We obtain the following theorem. 

\begin{theorem}\label{thm:AffineLieMinimal}
	Let $X : \Sigma \rightarrow \R^3$ be non-tubular affine Weingarten. Then it is Lie minimal if and only if it is a surface of revolution.
\end{theorem}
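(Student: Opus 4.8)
The plan is to show directly that a non-tubular affine Weingarten surface in $\R^3$ that is Lie minimal must be a channel surface, and then invoke Proposition~\ref{proposition:channelWeingartenSurfacesAreRotational} to conclude that it is rotational; the converse follows immediately from Corollary~\ref{Corollary: surface of revolution is Lie minimal}. The essential observation is that the affine relation \eqref{Equation of affine linear Weingarten surface}, namely $xk_1 + yk_2 + z = 0$, lets us express one principal curvature as an affine function of the other. Since we have excluded the cmc case (where $x=y$), and since the tubular case ($y=0$ or $x=0$, one principal curvature constant) is also excluded by hypothesis, we may assume $x,y\neq 0$ and $x\neq y$, so that $k_2 = -(x/y)k_1 - z/y$ with $-x/y \neq 1$. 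Differentiating gives $k_{2,u} = -(x/y)k_{1,u}$, $k_{2,v} = -(x/y)k_{1,v}$, and similarly for the mixed second derivatives.

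The key step is to substitute these relations into the two Lie minimality equations \eqref{eq:lieMinimalConditions}. Writing $\lambda = -x/y \neq 1$ and using $k_2 - k_1 = (\lambda - 1)k_1 - z/y$, each of the two equations in \eqref{eq:lieMinimalConditions} becomes a PDE purely in $k_1$. The first equation, $(k_2-k_1)k_{1,uv} + 2k_{1,u}k_{1,v}=0$, is unchanged in form except that $k_2-k_1$ is now an affine function of $k_1$. The second equation, after dividing through by $\lambda$ (which is nonzero since $x\neq 0$), reads $(k_1-k_2)k_{1,uv} + 2k_{1,u}k_{1,v} = 0$. I would then subtract these two equations: since $k_2-k_1 = -(k_1-k_2)$, adding them yields $4k_{1,u}k_{1,v}=0$, forcing $k_{1,u}k_{1,v}\equiv 0$ on the surface. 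By continuity and the fact that the zero sets are closed, on an open dense set either $k_{1,u}\equiv 0$ or $k_{1,v}\equiv 0$; in either case $k_1$ (and hence $k_2$) is constant along one principal direction, so $X$ is a channel surface.

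Once channelness is established, Proposition~\ref{proposition:channelWeingartenSurfacesAreRotational} (second bullet, affine Weingarten in $\R^3$) delivers that $X$ is rotational, which completes the nontrivial direction.

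The main obstacle I anticipate is the argument that $k_{1,u}k_{1,v}\equiv 0$ actually forces a single factor to vanish identically (rather than $k_{1,u}$ vanishing on part of $\Sigma$ and $k_{1,v}$ on another part). One must argue that the channel condition holds globally: a clean way is to feed $k_{1,u}k_{1,v}=0$ back into \eqref{eq:lieMinimalConditions}, which then forces $k_{1,uv}$ to vanish wherever both first derivatives do not, and use connectedness together with the analyticity of solutions to the Lie minimal system to rule out a mixed configuration. A careful treatment of this global step, together with verifying that the excluded degenerate cases ($y=0$, $x=y$) are exactly the tubular and cmc cases already handled, is where the real care is needed.
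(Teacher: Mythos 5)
Your overall strategy --- substitute the affine relation \eqref{Equation of affine linear Weingarten surface} into the Euler--Lagrange equations \eqref{eq:lieMinimalConditions}, deduce that $X$ is a channel surface, and finish with Proposition~\ref{proposition:channelWeingartenSurfacesAreRotational} (plus Corollary~\ref{Corollary: surface of revolution is Lie minimal} for the converse) --- is exactly the paper's; the only structural difference is that you eliminate the $k_{1,uv}$-term, while the paper eliminates the $k_{1,u}k_{1,v}$-term and runs a three-case analysis. However, your key computation contains an error. Writing $k_2=\lambda k_1 - z/y$ with $\lambda=-x/y$, the second equation of \eqref{eq:lieMinimalConditions} becomes $\lambda(k_1-k_2)k_{1,uv}+2\lambda^2 k_{1,u}k_{1,v}=0$; dividing by $\lambda$ leaves $(k_1-k_2)k_{1,uv}+2\lambda\, k_{1,u}k_{1,v}=0$, \emph{not} $(k_1-k_2)k_{1,uv}+2k_{1,u}k_{1,v}=0$. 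Adding the first equation of \eqref{eq:lieMinimalConditions} then gives $2(1+\lambda)k_{1,u}k_{1,v}=0$, not $4k_{1,u}k_{1,v}=0$. Your conclusion $k_{1,u}k_{1,v}\equiv 0$ is rescued only because $1+\lambda=(y-x)/y\neq 0$, i.e.\ only because $x\neq y$; the dependence on this assumption is essential, not incidental.

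This matters because of a second, linked gap: you say the cmc case $x=y$ has been ``excluded'', but nothing in the hypotheses excludes it --- non-tubular affine Weingarten surfaces include all non-tubular cmc surfaces. And precisely when $\lambda=-1$ the two substituted equations coincide, so no linear combination of them yields any information: this case genuinely cannot be settled by this algebra and must instead be handled by invoking Theorem~\ref{Theorem: cmc Lie minimal} (whose proof requires the Gauss equation, not only \eqref{eq:lieMinimalConditions}). That is exactly what the paper does as its first case. Your erroneous combination $4k_{1,u}k_{1,v}=0$ would hold for every $\lambda$, which is how the slip conceals the missing case. Once these two points are repaired your argument is sound; your closing worry, that $k_{1,u}k_{1,v}\equiv 0$ might split $\Sigma$ into regions where different factors vanish, is legitimate (the paper passes over it silently), and your sketched resolution via connectedness and analyticity is a reasonable way to close it.
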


\begin{proof}
    Since every surface of revolution is Lie minimal (Corollary~\ref{Corollary: surface of revolution is Lie minimal}), let us assume that $X$ is Lie minimal and an affine Weingarten surface.
	From Lemma~\ref{lem:lieMinimalConditions}, we have
	\begin{align*}
		((x+y)k_1+z)k_{1,uv}-2xk_{1,u}k_{1,v}&=0 \\
        \textrm{and} \quad ((x+y)k_1+z)k_{1,uv}-2yk_{1,u}k_{1,v}&=0\\
		\therefore \quad (x-y)((x+y)k_1+z)k_{1,uv}&=0.
	\end{align*}
	There are three cases to consider: for $x-y=0$, the surface is Lie minimal cmc and thus rotational. 
	
    If $k_{1,uv}=0$, we have $k_{1,u}k_{1,v}=0$ because $X$ is Lie minimal and Lemma~\ref{lem:lieMinimalConditions}. Thus, $X$ is a channel surface and, according to Proposition~\ref{proposition:channelWeingartenSurfacesAreRotational}, a surface of revolution. 

    Finally, if $(x+y)k_1 + z=0$ both principal curvatures are constant. Thus, $X$ is tubular, which goes against our assumptions. 
\end{proof}
\section{Future work}
We have seen that linear Weingarten surfaces in $\R^3$ parallel to cmc surfaces are Lie minimal if and only if they are rotational in Theorem~\ref{thm:LieMinimalLW}. This result, of course, extends to the other ambient space forms. Since linear Weingarten surfaces belong to the realm of Lie sphere geometry, in the sense that they are those $\Omega$-surfaces that have constant conserved quantities (see \cite{burstall2012}) it seems a worthy goal to employ Lie sphere geometric methods to investigate Lie minimal linear Weingarten surfaces. This shall be the subject of a future project. 

\vspace{10pt}

\noindent\textbf{Acknowledgements.}
This paper is based on the fourth author's Master thesis, the goal of which was to investigate Lie minimal surfaces with additional curvature properties. 
The authors would like to express their gratitude to Professor Wayne Rossman for his support and continued interest in this project. Further, we thank Professor Udo Hertrich-Jeromin for fruitful discussions at Kobe University. This work 
was done while the third author was a JSPS International Research Fellow (Graduate 
School of Science, Kobe University) and has been supported by the JSPS 
Grant-in-Aid for JSPS Fellows 22F22701.


\bibliographystyle{abbrvnat} 
\DeclareUrlCommand\doi{\def\UrlLeft##1\UrlRight{doi:\href{http://dx.doi.org/##1}{##1}}\urlstyle{tt}}

\bibliography{references}

\end{document}